\documentclass[12]{amsart}
\usepackage{amsfonts,amssymb,amscd,amsmath,enumerate,verbatim,calc}

\usepackage[all]{xy}

\newcommand{\CM}{Cohen-Macaulay}

\newcommand{\ff}{\text{if and only if}}
\newcommand{\wrt}{with respect to}

\newcommand{\m}{\mathfrak{m} }

\newcommand{\p}{\mathfrak{p} }

\newcommand{\grade}{\operatorname{grade}}
\newcommand{\depth}{\operatorname{depth}}

\newcommand{\htt}{\operatorname{ht}}

\newcommand{\Proj}{\operatorname{Proj}}

\newcommand{\Tor}{\operatorname{Tor}}
\newcommand{\Supp}{\operatorname{Supp}}
\newcommand{\Min}{\operatorname{Min}}

\theoremstyle{plain}

\newtheorem{theorem}{Theorem}[section]
\newtheorem{corollary}[theorem]{Corollary}
\newtheorem{lemma}[theorem]{Lemma}
\newtheorem{proposition}[theorem]{Proposition}

\theoremstyle{definition}
\newtheorem{definition}[theorem]{Definition}

\newtheorem{remark}[theorem]{Remark}
\newtheorem{example}[theorem]{Example}

\theoremstyle{remark}

\begin{document}

 \title{analytic deviation one ideals and Test modules}
 \author{Ganesh S. Kadu and Tony~J.~Puthenpurakal}
\date{\today}

\address{Department of Mathematics,
          College of Engineering Pune, \newline
        Pune 411005, India}
\email{ganeshkadu@gmail.com}

\address{Department of Mathematics, Indian Institute of Technology Bombay, Powai, Mumbai 400 076, India}

\email{tputhen@math.iitb.ac.in}
\begin{abstract}
Let $(A,\m)$ be a \CM \ local ring of dimension $d \geq 1$ and $I$
an ideal in $A.$ Let $M$ be a finitely generated maximal
{\CM}$A$-module. Let $I$ be a locally complete intersection ideal
 with $\htt_M(I)=d-1$, $l_M(I)=d$ and reduction number at most
one.
 We prove that the polynomial $ n \mapsto \ell(\Tor^{A}_{1}(M, A/I^{n+1})) $ either has degree $d-1$ or  $F_I(M) $ is a free $F(I)-$module.
\end{abstract}
\maketitle
\section{introduction}
Let $(A,\m)$ be a \CM \ local ring of dimension $d \geq 1$ and $I$
an ideal in $A.$ Let $M$ be a finitely generated $A$-module.
 Let $F(I)= \bigoplus_{n\geq 0}I^n/\m I^n$ be the fiber cone of $I$ and
 $F_I(M) = \bigoplus_{n\geq 0}I^nM/\m I^nM$ be the fiber module of $M$ \wrt \ $I.$ Let $l(I)=\dim F(I)$ denote the analytic spread of $I.$ \\
 Suppose that $\ell(\Tor^{A}_{1}(M, A/I^{n+1})) < \infty$ for all $n \geq 1$.
 In \cite{Kod} Kodiyalam proved that there exists a polynomial $t_I^A(M,z) \in \mathbb Q [z]$ of degree $\leq l(I)-1$   such that
\[
t_I^A(M,n) = \ell(\Tor^{A}_{1}(M, A/I^{n+1})) \quad \text{for} \ n
\gg 0.
\]
It is of some interest to find the degree of  $t_I^A(M,z)$. In
\cite[18]{Pu1} it was proved that if $M$ is a maximal \CM \
$A$-module and $I = \m$ then
 \[
 \deg t_\m^A(M,z) < d -1 \quad \text{\ff} \quad M \ \text{is free}.
 \]
In \cite[Theorem I]{Pu-Srikanth} this result was generalized to
arbitrary finitely generated modules with projective dimension at
least $1.$
On the other hand it is easily seen that if $I = (x_1,\ldots,x_d)$ is a parameter ideal in $A$ and $M$ is a maximal \CM \ $A-$module then $\Tor^{A}_{1}(M, A/I^{n+1}) = 0$ for all $n \geq 0$, see \cite[20]{Pu1}.\\
 Assume now that $M$ is a non-free maximal {\CM} $A-$module. Suppose that $I$ is not $\m-$primary. Two natural conditions
 when  $\ell(\Tor^{A}_{1}(M, A/I^{n+1})) < \infty$ for all $n \geq 0$ are as follows:\\
 (1) $A_{\p}$ is a regular local ring for all primes $\p \neq \m.$ \\
(2) $\htt(I) =d-1$ and $I$ is locally a complete intersection. \\
 We focus our attention on the second condition.  We assume that $\htt(I) =d-1$ and $l(I)=d$ with $I$ a locally complete intersection ideal. \\
 We prove the following theorem
\begin{theorem}\label{anal}
Let $(A,\m)$ be a \CM \ local ring of dimension $d \geq 1$. Let
$M$ be a maximal  \CM  \ $A$-module. Assume that $I$ is locally a
complete intersection  ideal in $A$ with $r(I) \leq 1, \; \htt (I)
= \htt_M(I)= d -1$ and $l(I) = d.$ If $\deg t_I^A(M,z) < d -1$
then $F_I(M) $ is a free $F(I)-$module.
\end{theorem}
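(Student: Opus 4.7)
I would set up the natural $F(I)$-module surjection $\pi: F(I)^{\mu(M)} \cong F(I) \otimes_k (M/\m M) \twoheadrightarrow F_I(M)$ and prove its kernel $K$ vanishes---the assertion that $F_I(M)$ is free of rank $\mu(M)$ over $F(I)$ is precisely $K = 0$. After passing to $A[T]_{\m A[T]}$ to arrange an infinite residue field $k = A/\m$, I choose a minimal reduction $J = (x_1, \ldots, x_d)$ of $I$ with $I^2 = JI$, which exists by $l(I) = d$ and $r(I) \leq 1$. The $x_i$ are analytically independent, so $F(J) \cong k[X_1, \ldots, X_d]$ embeds into $F(I)$ in degree one.

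To link $K$ with $t_I^A(M, z)$, tensor the short exact sequence $0 \to \Tor_1^A(M, A/I^{n+1}) \to M \otimes_A I^{n+1} \to I^{n+1}M \to 0$ with $A/\m$: this yields a surjection $\Tor_1^A(M, A/I^{n+1}) \otimes_A k \twoheadrightarrow K_{n+1}$, so $H_K(n+1) \leq \mu_A(\Tor_1^A(M, A/I^{n+1})) \leq t_I^A(M, n)$ for $n \gg 0$. Under the hypothesis $\deg t_I^A(M, z) < d-1$, the Hilbert polynomial of $K$ has degree strictly less than $d-1$, hence $\dim_{F(I)} K \leq d-1$. The decisive step is to upgrade $\dim K < d$ to $K = 0$; equivalently, to show that $F_I(M)$ is generically free of rank $\mu(M)$ over $F(I)$.

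I would prove this by induction on $d$. Since $\grade(I, M) = \htt I = d-1 \geq 1$ for $d \geq 2$, the infinite residue field allows the choice of $x \in I \setminus \m I$ that is $A$-regular, $M$-regular, $I$-superficial for $M$, and whose image $x^* \in F(I)_1$ is a non-zero-divisor on both $F(I)$ and $F_I(M)$. A transversality argument using the locally complete intersection hypothesis shows that $\bar A := A/(x)$, $\bar M := M/xM$, $\bar I := I\bar A$ satisfy the hypotheses of the theorem in dimension $d-1$ (with $\htt \bar I = d-2$, $l(\bar I) = d-1$, $r(\bar I) \leq 1$, and $\bar I$ locally complete intersection); moreover $F(\bar I) \cong F(I)/(x^*)$ and $F_{\bar I}(\bar M) \cong F_I(M)/x^* F_I(M)$. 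A change-of-rings argument combined with the Artin--Rees stabilization $(I^{n+1} : x) = I^n$ for $n \gg 0$ gives $\deg t_{\bar I}^{\bar A}(\bar M, z) \leq \deg t_I^A(M, z) - 1$, since essentially $t_{\bar I}^{\bar A}(\bar M, n)$ reads off as the first difference of $t_I^A(M, n)$ modulo lower-order terms. Induction yields freeness of $F_{\bar I}(\bar M)$ over $F(\bar I)$, and the graded Nakayama lemma (using the regularity of $x^*$) lifts this to freeness of $F_I(M)$ over $F(I)$. The base $d = 1$ is immediate since $\htt I = 0$ makes the ingredients collapse.

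The main obstacle I foresee is the careful verification that a generic $x$ preserves \emph{all} the structural hypotheses on passage to $\bar A$ simultaneously (especially that $\bar I$ remains locally complete intersection with $l(\bar I) = d-1$ and $r(\bar I) \leq 1$), together with the precise bookkeeping of the change-of-rings comparison for $t_I^A$. These are the places where the analytic deviation one, reduction number at most one, and locally complete intersection hypotheses are genuinely indispensable.
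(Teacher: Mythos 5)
Your opening reduction---freeness of $F_I(M)$ is equivalent to vanishing of the kernel $K$ of $F(I)^{\mu(M)}\twoheadrightarrow F_I(M)$, and $\dim_k K_{n+1}\le \ell(\Tor_1^A(M,A/I^{n+1}))$, whence $\dim K\le d-1$---is correct and is genuinely different from the paper, which argues by induction on $d$ throughout; but you stop exactly at the decisive point. To upgrade $\dim K<d$ to $K=0$ you would need $\Ass(K)\subseteq\Ass(F(I))=\Min(F(I))$ with $F(I)$ equidimensional of dimension $d$, i.e.\ the Cohen--Macaulayness (or at least unmixedness) of the fiber cone; this is precisely where $r(I)\le 1$ together with the locally complete intersection, deviation-one hypotheses enter (the paper quotes Cortadellas--Zarzuela for $F(I)$ being \CM), and you never invoke it. Instead you pivot to an induction that mirrors the paper's strategy but with genuine gaps. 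The most serious one: you assume one can choose $x\in I\setminus \m I$ whose image in $F(I)_1$ is a non-zero-divisor on $F_I(M)$. This is circular---it presupposes $\depth F_I(M)>0$, which is essentially part of what is being proved and can fail in closely related situations (the paper's Section 5 example exhibits a maximal \CM \ $M$ with $\depth F_I(M)=0$). The paper only chooses $x^o$ filter-regular on $F_I(M)$ (Proposition \ref{regg1}), proves by induction that $F_I(M)/x^oF_I(M)\cong F_{\bar I}(N)$ is free over the \CM \ ring $F(\bar I)$ of dimension $d-1\ge 1$, hence of positive depth, and only then upgrades filter-regularity to regularity (Lemma \ref{free}(2)) before lifting freeness by graded Nakayama (Lemma \ref{free}(3)). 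Relatedly, the isomorphism $F_{\bar I}(N)\cong F_I(M)/x^oF_I(M)$ that you assert is not automatic for a superficial element: it requires $xM\cap I^nM=xI^{n-1}M$ for all $n$, i.e.\ the image of $x$ to be $G_I(M)$-regular (Valabrega--Valla, Lemma \ref{iso}), which the paper gets from the \CM-ness of $G_I(M)$ (Huckaba--Huneke extended to maximal \CM \ modules)---again using the hypotheses you label as mere ``transversality.''

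Second, the base case $d=1$ is not ``immediate'': $\htt(I)=0$ makes nothing collapse. There the hypothesis only gives $\Tor_1^A(M,A/I^n)=0$ for $n\gg0$, and the paper must still combine this with $r(I)\le 1$, the \CM-ness of $F(I)$ (hence $\mu(I^n)=c$ constant), the equality $aI^nM=I^{n+1}M$ for a minimal reduction $(a)$, and a degreewise Hilbert-function comparison to see that $F(I)^{\mu(M)}\to F_I(M)$ is an isomorphism. (Alternatively, $d=1$ would follow from your kernel argument once unmixedness of $F(I)$ is in place---another sign that this missing ingredient is the crux.) A smaller bookkeeping gap: the degree drop $\deg t_{\bar I}^{B}(N;z)\le \deg t_I^A(M;z)-1$ needs $x$ superficial not only for $M$ but also for the first syzygy $L$ of $M$, which is what makes $\Tor_1^A(M,i_n)$ injective for $n\gg0$ in Lemma \ref{Pu}; your sketch omits this. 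In sum, the proposal has the right skeleton but leaves unproved exactly the steps where $r(I)\le 1$, the locally complete intersection condition, and analytic deviation one do their work.
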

When $(A,\m)$ is a hypersurface ring of dimension $d=1$ we show
that  $\deg t_I^A(M,z) < d -1$  if and only if $M$  is free
$A-$module. We also give an example of a  non-free maximal {\CM}
$A-$module such that $F_I(M)$ is free $F(I)-$module.
\section{Preliminaries}
Let $(A,\m)$ be a  local ring with infinite residue field $k=A/
\m.$ Let $I$ be an ideal in $A$ and  $M$ be a finitely generated
$A$-module.
\begin{lemma}Let $(A,\m)$ be a \CM \ local ring of dimension $d \geq 1$. Let $M$ be maximal \CM \; $A-$module. Assume that $I=(x_1,...,x_m)$ is an ideal in $A$ generated by an $A-$regular sequence. Then $\Tor_1^A(M, A/I^n)=0$ for all $ n \geq 1.$
\end{lemma}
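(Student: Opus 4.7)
The plan is to reduce the vanishing of $\Tor_1^A(M, A/I^n)$ to the case $n=1$ via an inductive short exact sequence, exploiting the fact that an ideal generated by a regular sequence has a particularly clean associated graded ring.

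First I would establish two preliminary facts. The first is that $x_1,\ldots,x_m$ is a regular sequence on $M$ as well. Since $x_1,\ldots,x_m$ is an $A$-regular sequence in the CM local ring $A$, it can be extended to a full system of parameters on $A$; because $M$ is maximal \CM, every system of parameters on $A$ is a regular sequence on $M$, and in particular so is any initial segment. The second fact is that $I^n/I^{n+1}$ is a free $A/I$-module for every $n \geq 0$. This is the classical result (going back to Rees) that when $I$ is generated by a regular sequence, the associated graded ring $\operatorname{gr}_I(A)=\bigoplus_{n\geq 0}I^n/I^{n+1}$ is isomorphic to the polynomial ring $(A/I)[T_1,\ldots,T_m]$, whose graded pieces are free $A/I$-modules.

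Next I would handle the base case $n=1$. Since $x_1,\ldots,x_m$ is $A$-regular, the Koszul complex $K_\bullet(x_1,\ldots,x_m;A)$ is a free $A$-resolution of $A/I$. Hence
\[
\Tor_1^A(M,A/I) \;=\; H_1\bigl(K_\bullet(x_1,\ldots,x_m;A)\otimes_A M\bigr) \;=\; H_1\bigl(K_\bullet(x_1,\ldots,x_m;M)\bigr),
\]
and this last group vanishes because $x_1,\ldots,x_m$ is also $M$-regular by the first preliminary. Consequently $\Tor_1^A(M,A/I)=0$, and freeness of $I^n/I^{n+1}$ over $A/I$ then gives $\Tor_1^A(M, I^n/I^{n+1})=0$ for all $n$.

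For the inductive step, apply $M\otimes_A(-)$ to the short exact sequence
\[
0 \longrightarrow I^n/I^{n+1} \longrightarrow A/I^{n+1} \longrightarrow A/I^n \longrightarrow 0
\]
to obtain
\[
\Tor_1^A(M, I^n/I^{n+1}) \longrightarrow \Tor_1^A(M, A/I^{n+1}) \longrightarrow \Tor_1^A(M, A/I^n).
\]
The outer terms vanish by the base case and by the inductive hypothesis, respectively, forcing $\Tor_1^A(M,A/I^{n+1})=0$ and completing the induction.

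There is no real obstacle here; the proof is a standard assembly of the Koszul characterization of the depth-sensitive $\Tor$ combined with the freeness of the graded pieces of a complete intersection ideal. The one point that deserves care is verifying that $x_1,\ldots,x_m$ is indeed $M$-regular, but this is immediate from the maximal \CM\ hypothesis on $M$ over the CM base $A$.
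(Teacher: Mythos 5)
Your proof is correct and follows essentially the same route as the paper: induction on $n$ using the short exact sequence $0 \to I^n/I^{n+1} \to A/I^{n+1} \to A/I^n \to 0$ together with the freeness of $I^n/I^{n+1}$ over $A/I$. The only (minor) difference is in the base case, where you compute $\Tor_1^A(M,A/I)$ via the Koszul resolution and $M$-regularity of the sequence, while the paper tensors the syzygy sequence of $M$ with $A/I$; both are standard and equivalent in substance.
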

\begin{proof}
 The proof is by induction on $n.$ Let $n=1.$ Let  $L$ be the first syzygy of $M.$ We have the exact sequence,
 \[ 0 \rightarrow L \rightarrow A^s \rightarrow M \rightarrow 0 \]
 Since $A$ is \CM\;and $M$ is maximal \CM \;  $x_1,...,x_m$ is also an $M-$regular sequence. Now by \cite[1.1.4]{BH} tensoring with $A/I$ gives the following  exact sequence,
 \[ 0 \rightarrow L/IL \rightarrow (A/I)^s \rightarrow M/IM \rightarrow 0 \]
 Thus $\Tor_1^A(M, A/I)=0.$ Now assuming that  $\Tor_1^A(M, A/I^n)=0$ for $n=k$ we prove for $n=k+1.$ Consider the following exact sequence
 \[ 0 \rightarrow I^k/I^{k+1} \rightarrow A/I^{k+1} \rightarrow A/I^k \rightarrow 0 \]
 Now since $I=(x_1,...,x_m)$ is genrerated by $A-$regular sequence by \cite[1.1.8]{BH} $I^k/I^{k+1} \cong (A/I)^p$ for some $p \geq 1.$ Now applying the functor $- \otimes A/I^k  $ to the above exact sequence we get the following long exact sequence,
 \[ \rightarrow \Tor_1^A(M, I^k/I^{k+1}) \rightarrow  \Tor_1^A(M, A/I^{k+1}) \rightarrow  \Tor_1^A(M, A/I^k) \rightarrow \]
 Now $ \Tor_1^A(M, I^k/I^{k+1}) \cong  \Tor_1^A(M, (A/I)^p) \cong \Tor_1^A(M, A/I)^p =0.$ By induction hypothesis
 $\Tor_1^A(M, A/I^k)=0.$ Hence $ \Tor_1^A(M, A/I^{k+1})=0.$
\end{proof}
We first give two natural conditions when $\ell(\Tor^{A}_{1}(M,
A/I^{n+1}) < \infty$ for all $n \geq 0.$
\begin{lemma}\label{tor1} Let $(A,\m)$ be a \CM \ local ring of dimension $d \geq 1$. Let $M$ be non-free maximal \CM \; $A-$module and $I$ be ideal in $A.$ Assume $I$ is not $\m$-primary. Then $\ell(\Tor^{A}_{1}(M, A/I^{n+1}) < \infty$ for all $n \gg 0$ if  $A_{\p}$ is a regular local ring for all primes $\p \neq \m.$
\end{lemma}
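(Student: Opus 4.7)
The plan is to show that $\Tor^{A}_{1}(M, A/I^{n+1})$ has support contained in $\{\m\}$, since a finitely generated module over a local ring has finite length if and only if its support lies in the maximal ideal. Accordingly, I would fix an arbitrary prime $\p \neq \m$ and verify that the localization $\Tor^{A}_{1}(M, A/I^{n+1})_\p = \Tor^{A_\p}_1(M_\p, A_\p/I^{n+1}A_\p)$ vanishes.

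The argument splits into two cases according to whether $\p$ contains $I$. If $I \not\subseteq \p$, then $I$ meets $A \setminus \p$ and hence $IA_\p = A_\p$, so $A_\p/I^{n+1}A_\p = 0$ and the Tor is trivially zero. If instead $I \subseteq \p$, then since $\p \neq \m$ the hypothesis guarantees that $A_\p$ is a regular local ring, and I would try to show that $M_\p$ is free over $A_\p$; this immediately forces the vanishing of every positive Tor.

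To establish freeness of $M_\p$ I would use that $A_\p$ is regular, so $\projdim_{A_\p} M_\p < \infty$, and then invoke the Auslander--Buchsbaum formula $\projdim_{A_\p} M_\p = \depth A_\p - \depth_{A_\p} M_\p$. Thus it is enough to prove that $M_\p$ is a maximal \CM \ $A_\p$-module. The key underlying fact, which I would take as the main technical ingredient, is the standard localization property: if $M$ is maximal \CM \ over the \CM \ ring $A$, then $M_\p$ is maximal \CM \ over $A_\p$ for every $\p \in \Supp M$. This is proved by induction on $\dim A_\p$, using at each step that $\Ass M$ consists of minimal primes of $A$ (since $M$ is maximal \CM), so whenever $\dim A_\p \geq 1$ one can pick an element of $\p$ that is regular on $M$ and pass to $M/xM$ over $A/(x)$.

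The main obstacle is really the localization lemma for the MCM property; everything else is bookkeeping. I note that the hypothesis that $M$ is non-free and the hypothesis that $I$ is not $\m$-primary are not actually used in this argument, and the proof in fact gives $\ell(\Tor^{A}_{1}(M, A/I^{n+1})) < \infty$ for all $n \geq 0$, not merely for $n \gg 0$.
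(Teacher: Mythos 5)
Your proposal is correct and follows essentially the same route as the paper: both reduce to showing $\Supp(\Tor^{A}_{1}(M, A/I^{n+1})) \subseteq \{\m\}$ by observing that for $\p \neq \m$ the module $M_\p$ is either zero or (being maximal \CM \ over the regular local ring $A_\p$) free, so the localized Tor vanishes. You merely spell out the details (the case $I \not\subseteq \p$, the Auslander--Buchsbaum argument, and the localization of the maximal \CM \ property) that the paper leaves as "easy to see," and your remark that the conclusion holds for all $n \geq 0$ without the non-freeness or non-$\m$-primary hypotheses is consistent with how the paper later uses this fact.
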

\begin{proof}
 Suppose that $A_{\p}$ is a regular local ring for all primes $\p \neq \m.$  We first note that if
 $  P \in \Supp(M)$ then
 $M_P$ is maximal \CM. Thus when  $\p \neq \m$ it follows that  either $M_P=0$ or $M_P$ is free. It is now easy to see that
 $ \Supp(\Tor^{A}_{1}(M, A/I^{n+1})) \subseteq \{ \m \} .$  So $\ell(\Tor^{A}_{1}(M, A/I^{n+1}) < \infty$ for all $n \geq 0.$\\
\end{proof}
\begin{proposition}\label{tor2}
 Let $(A,\m)$ be a \CM \ local ring of dimension $d \geq 1$. Let $M$ be non-free maximal \CM \; $A-$module and $I$ be ideal in $A.$  If  $ \htt(I) =d-1$, $l(I)=d$ and $I$ is locally a complete intersection then $\ell(\Tor^{A}_{1}(M, A/I^{n+1}) < \infty$ for all $n \gg 0.$
\end{proposition}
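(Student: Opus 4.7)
The plan is to show the stronger statement that $\operatorname{Supp}(\operatorname{Tor}_1^A(M,A/I^{n+1}))\subseteq\{\m\}$ for every $n\geq 0$, which immediately gives $\ell(\operatorname{Tor}_1^A(M,A/I^{n+1}))<\infty$. So I would fix a prime $\p\neq\m$ and argue that the localization of the Tor at $\p$ vanishes.

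First I would split into two cases depending on whether $I\subseteq\p$. If $I\not\subseteq\p$ then $I_\p=A_\p$, so $A_\p/I^{n+1}_\p=0$ and the localized Tor is zero, nothing to do. The interesting case is $I\subseteq\p$. Here the assumption $\htt(I)=d-1$ together with $\p\neq\m$ forces $d-1\leq\htt(\p)\leq d-1$, so $\htt(\p)=d-1$ and $\p$ is a minimal prime of $I$. The hypothesis that $I$ is locally a complete intersection then says that $I_\p$ is generated by an $A_\p$-regular sequence (of length exactly $d-1$).

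Next I would verify that $M_\p$ is either zero or a maximal Cohen--Macaulay $A_\p$-module, so that Lemma~2.1 applies to the data $(A_\p, M_\p, I_\p)$. This is where one uses that $A$ is Cohen--Macaulay (hence equidimensional): if $\p\in\Supp(M)$ then $\dim M_\p=\dim M-\dim(A/\p)=d-\dim(A/\p)=\dim A_\p$, since $\htt(\p)+\dim(A/\p)=d$. Applying the preceding Lemma to $(A_\p,M_\p,I_\p)$ yields $\operatorname{Tor}_1^{A_\p}(M_\p,A_\p/I^{n+1}_\p)=0$ for all $n\geq 0$, and since localization commutes with Tor this is exactly $\operatorname{Tor}_1^A(M,A/I^{n+1})_\p=0$.

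There is no serious obstacle in this argument; the proposition is really a localization exercise combined with Lemma~2.1. The one mildly delicate point I would be careful about is the verification that $M_\p$ remains maximal Cohen--Macaulay after localization at a minimal prime of $I$ of height $d-1$ (using equidimensionality of $A$), since this is what allows the application of Lemma~2.1 at $\p$. Note that the hypothesis $l(I)=d$ is not needed for this proposition; it enters in the main Theorem~\ref{anal}.
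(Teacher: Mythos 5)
Your proposal is correct and follows essentially the same route as the paper: show $\Supp(\Tor_1^A(M,A/I^{n+1}))\subseteq\{\m\}$ by localizing at a prime $\p\neq\m$ containing $I$ (necessarily minimal over $I$ of height $d-1$, since no non-maximal prime has height $d$), note $M_\p$ is zero or maximal Cohen--Macaulay, and invoke the vanishing of $\Tor_1$ against powers of an ideal generated by a regular sequence (the paper cites \cite[20]{Pu1} for this where you use Lemma 2.1). The only cosmetic difference is in dimension $d=1$: there the paper avoids interpreting the locally complete intersection hypothesis at height-zero primes and instead uses that $I_P$ is nilpotent in the Artinian ring $A_P$, which is why its conclusion is phrased for $n\gg 0$, whereas your reading ($I_\p$ generated by an empty regular sequence, hence zero) gives vanishing for all $n\geq 0$.
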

\begin{proof}
It suffices to show that  $\Supp(\Tor^{A}_{1}(M, A/I^{n+1}))
\subseteq \{ \m \}.$  So let $ P \in \Supp(M/IM)$ and consider
 $(\Tor^{A}_{1}(M, A/I^{n+1}))_P = \Tor^{A_P}_{1}(M_P, A_P/I_P^{n+1}).$ If $P \neq \m$ then by hypothesis $I_P$ is a complete intersection. If $\dim M \geq 2$ then by \cite[20]{Pu1} we have  $\Tor^{A_P}_{1}(M_P, A_P/I_P^{n+1})=0.$ \\ If $\dim M = 1= \dim A$ then $\dim M_P= \dim A_P =0.$
  So when $ P \neq \m$ we have $I_P^n=0$ for $n \gg 0.$  Hence
 $$\Tor^{A_P}_{1}(M_P, A_P/I_P^{n+1})= \Tor^{A_P}_{1}(M_P, A_P)=0$$
  So $\Supp(\Tor^{A}_{1}(M, A/I^{n+1})) \subseteq \{ \m \}.$ Thus
  $ \ell(\Tor^{A}_{1}(M, A/I^{n+1}) < \infty \; \text{for all} \; n \gg 0. $
  \end{proof}
\begin{remark}
 If $l(I) =d-1$ and $I$ is locally a complete intersection then $I$ is a complete intersection.
\end{remark}
 We need the following lemma.
 \begin{lemma}\label{free}Let $R = \bigoplus_{n\geq 0} R_n$ be a \CM \ standard graded algebra over an infinite field $k = R_0$. Let $M=\bigoplus_{n\geq 0} M_n$ be a finite graded $R$-module.
Let $x \in R_1$ be $M$-filter regular. Set $N = M/xM$ and $S =
R/(x)$. Assume $\depth(R) \geq  1.$ Then
\begin{enumerate}[\rm (1)]
\item We can choose $x$ such that $x$ is  $R$-regular. \item If
$\grade(R_+, N) > 0$ then $x$ is $M$-regular. \item If $N$ is free
$S$-module and $x$ is $M$-regular  then $M$ is a free $R$-module.
\end{enumerate}
 \end{lemma}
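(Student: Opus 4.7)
The plan is to handle the three statements in order, invoking graded prime avoidance for (1), a local cohomology computation for (2), and a lifting argument with graded Nakayama for (3). For (1), the associated primes of $R$ and of $M$ form a finite set of graded primes, and $\depth R \geq 1$ excludes $R_+$ from $\Ass R$. For each $P \in \Ass R \cup (\Ass M \setminus \{R_+\})$, $P$ is properly contained in $R_+$, so $P \cap R_1 \subsetneq R_1$ is a proper $k$-subspace. Since $k$ is infinite, the finite union of these proper subspaces cannot cover $R_1$; any element outside the union is simultaneously $R$-regular and $M$-filter regular.

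For (2), set $K = (0:_M x)$. Filter regularity gives $R_+^\ell K = 0$ for some $\ell$, so $K \subseteq H^0_{R_+}(M)$; since the chain $(0:_M R_+^n)$ stabilizes, $H^0_{R_+}(M)$ is finitely generated, and being annihilated by a power of $R_+$ it has finite length. Now split the four-term exact sequence $0 \to K \to M \xrightarrow{x} M \to N \to 0$ as $0 \to K \to M \to xM \to 0$ and $0 \to xM \to M \to N \to 0$, and apply $H^*_{R_+}$. Using $H^0_{R_+}(N) = 0$ (from the grade hypothesis), the second yields an isomorphism $H^0_{R_+}(xM) \xrightarrow{\sim} H^0_{R_+}(M)$ induced by the inclusion; using $H^i_{R_+}(K) = 0$ for $i \geq 1$, the first identifies $K$ as the kernel of the surjection $H^0_{R_+}(M) \twoheadrightarrow H^0_{R_+}(xM)$ induced by multiplication by $x$. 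Composing, $x$ acts surjectively on $H^0_{R_+}(M)$ with kernel $K$; but $x$ is nilpotent on that finite-length $R_+$-torsion module, so $H^0_{R_+}(M) = 0$ and thus $K = 0$. The delicate point, and what I expect to be the main obstacle, is precisely to identify both induced maps on $H^0_{R_+}$ and exploit this ``surjective nilpotent'' collapse.

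For (3), lift a homogeneous $S$-basis $\bar e_1, \ldots, \bar e_m$ of $N$ to homogeneous elements $e_1, \ldots, e_m \in M$, and form the induced graded map $\phi \colon F := \bigoplus_i R(-a_i) \to M$. By construction $\phi \otimes_R S$ is the chosen isomorphism onto $N$, whence $\phi \otimes_R k$ is surjective and graded Nakayama makes $\phi$ surjective. Let $C = \ker \phi$. From the short graded free resolution $0 \to R(-1) \xrightarrow{x} R \to S \to 0$ and $M$-regularity of $x$ we get $\Tor_1^R(M, S) = 0$, so tensoring $0 \to C \to F \to M \to 0$ with $S$ yields the short exact sequence $0 \to C/xC \to F/xF \to N \to 0$ whose right-hand map is the isomorphism $\phi \otimes S$. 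Therefore $C/xC = 0$, and graded Nakayama once more forces $C = 0$, i.e., $M \cong F$ is a free $R$-module.
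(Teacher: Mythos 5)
Your proof is correct and follows essentially the same route as the paper: prime avoidance over the infinite field for (1), forcing $H^0_{R_+}(M)=0$ from $\grade(R_+,N)>0$ and the $R_+$-torsion kernel for (2), and a graded free cover together with $\Tor_1^R(M,S)=0$ and graded Nakayama for (3). The only differences are cosmetic: in (2) you split the four-term sequence into two short exact sequences and use that a nilpotent action cannot be surjective on a nonzero module, where the paper uses the degree shift and finite dimensionality of $H^0_{R_+}(M)$; in (3) you build the cover from a lifted homogeneous basis of $N$ rather than a minimal cover of $M$, which if anything makes the conclusion $C/xC=0$ more transparent than the paper's step ``$N\cong S^l$, thus $L/xL=0$''.
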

 \begin{proof}
  (i) We first observe that since  $\depth(R) \geq 1$ we have $R_+ \notin Ass(R).$ Let $B= Ass(R) \cup Ass(M) \setminus V(R_+).$
  Note that $R_1$ is a vector space over an infinite field $k$. The set $C= \{ P \cap R_1 \mid P \in B \}$ is a finite set consisting of proper subspaces of $R_1.$ So we can choose an  element $x \in R_1 \setminus \bigcup_{P \in C}P \cap R_1.$ It is now easy to see that $x$ is $R-$regular and $M-$filter regular.\\
  (ii) Let $N=M/xM$ and $B=(0:_M x).$ Since $B_n=0$ for $n>>0,$ $H^i_{R+}(B)=0$ for $i>0.$ So we have the following long exact sequence

$$0 \xrightarrow[\hspace{13pt}]{} H^0_{R+}(B) \xrightarrow[\hspace{13pt}]{} H^0_{R+}(M)(-1) \xrightarrow[\hspace{13pt}]{\mu_x} H^0_{R+}(M) \xrightarrow[\hspace{13pt}]{} H^0_{R+}(N) $$
 $$ ~~~~~~~~~~~~~~~~~~~\; \; \; \; \; \xrightarrow[\hspace{13pt}]{} H^1_{R+}(M)(-1) \xrightarrow[\hspace{13pt}]{\mu_x}  H^1_{R+}(M)    \xrightarrow[\hspace{13pt}]{} H^1_{R+}{R+}(N)  $$
 \[ ..... \]
 Since $\grade(R_+, N) > 0$ we have $H^0_{R+}(N)=0.$ Hence the exact sequence above becomes
 $$0 \xrightarrow[\hspace{13pt}]{} H^0_{R+}(B) \xrightarrow[\hspace{13pt}]{} H^0_{R+}(M)(-1) \xrightarrow[\hspace{13pt}]{\mu_x} H^0_{R+}(M) \longrightarrow 0$$
  Now note that $H^0_{R+}(M) $ is a finite dimensional vector space. Thus $\mu_x$ surjective gives
  $H^0_{R+}(M) \cong H^0_{R+}(M)(-1).$ So $H^0_{R+}(M) =0.$ Therefore  $\grade(R_+, M) > 0.$ It follows that $x$ is $M-$regular.\\
  (iii) Let $l=\mu(M).$ Note that $l =\mu(M/xM).$
  Now consider the following exact sequence:
  $$ 0 \longrightarrow L \longrightarrow R^l\longrightarrow M \longrightarrow 0 $$
  where $L$ is the first syzygy of $M.$ Now since $x$ is $M-$regular it follows from  \cite[1.1.4]{BH} that
  $$0 \longrightarrow  L/xL \longrightarrow  S^l \longrightarrow N \longrightarrow 0 $$
  is exact sequence. By hypothesis $N$ is free $S-$module and so $N \cong S^l.$ Thus $L/xL=0.$ By graded Nakayama lemma
   we get that $L=0.$ Thus $M \cong R^l$ and hence a free $R-$module.
 \end{proof}
 We recall the definition of superficial element.
 \begin{definition}An element $ x \in I$ is $I-$superficial for $M$ if there exists a positive integer $c$ with
\[(I^{n+1}M :_M x) \cap I^cM = I^nM  \; \; \text{for all} \; \;  n \geq c. \]

\end{definition}
\begin{remark}\label{super1}
 Assume that $\grade(I,M) \geq 1.$  If an element is $I-$superficial on $M$ then $x$ is regular on $M$ and
 \[(I^{n+1}M :_M x) = I^nM  \; \text{for all} \; n>>0. \]
\end{remark}

\noindent A convention: The degree of the zero polynomial is
defined to be $- \infty.$

\begin{lemma}\label{Pu}
 Let $(A, \m) $ be a local ring and $M$ a finite non-free $A-$module. Let $I$ be an ideal in $A.$  Denote by $L$ the first syzygy of $M.$ Let $x \in I$ be a superficial non-zero divisor on $A,$ $M,$ and $L.$  Suppose that $\ell(\Tor^{A}_{1}(M, A/I^{n+1})) < \infty $ for $n >> 0.$
 Set $B=A/xA $, $N=M/xM$ and $J = I/(x).$ Then  we have
$$t_I^A(M;n) =  t_I^A(M; n-1) + t_J^B(N; n) ~~~~ for ~ all~~~ n \gg 0. $$
$$\deg t_J^B(N;z) \leq \deg t_I^A(M;z) - 1. $$
\end{lemma}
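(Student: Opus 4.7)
The plan is to exploit the short exact sequence
$$0 \to A/I^n \xrightarrow{\cdot x} A/I^{n+1} \to B/J^{n+1} \to 0,$$
which is valid for all $n \gg 0$: since $x$ is $A$-regular and $I$-superficial on $A$, Remark~\ref{super1} gives $(I^{n+1}:_A x) = I^n$ eventually, making the left map injective. Tensoring this sequence with $M$ over $A$ yields the long exact sequence
\begin{align*}
\Tor_1^A(M,A/I^n) &\xrightarrow{\phi_n} \Tor_1^A(M,A/I^{n+1}) \to \Tor_1^A(M,B/J^{n+1}) \\
&\to M/I^n M \xrightarrow{\psi_n} M/I^{n+1}M \to N/J^{n+1}N \to 0,
\end{align*}
and the strategy is to show this breaks, for $n \gg 0$, into two short exact sequences whose length-counts produce the desired recursion.

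First I would identify $\Tor_p^A(M,B/J^{n+1})$ with $\Tor_p^B(N,B/J^{n+1})$. Since $x$ is $A$-regular and $M$-regular, the resolution $0 \to A \xrightarrow{x} A \to B \to 0$ remains exact after applying $M \otimes_A -$, so $\Tor_{\geq 1}^A(M,B) = 0$ and $\Tor_0^A(M,B) = N$. The change-of-rings spectral sequence $\Tor_p^B(\Tor_q^A(M,B),B/J^{n+1}) \Rightarrow \Tor_{p+q}^A(M,B/J^{n+1})$ then collapses to the claimed isomorphism. Next, $M$-superficiality of $x$ gives $(I^{n+1}M :_M x) = I^n M$ for $n \gg 0$, which is precisely the kernel of $\psi_n$; hence $\psi_n$ is injective and the long exact sequence terminates on the right.

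The main obstacle is proving that $\phi_n$ is injective for $n \gg 0$; this is exactly where the superficiality assumption on the first syzygy $L$ enters. Choosing a presentation $0 \to L \to A^r \to M \to 0$ and tensoring with $A/I^n$, the standard identification gives $\Tor_1^A(M,A/I^n) \cong (L \cap I^n A^r)/I^n L$ as a submodule of $L/I^n L$, and under this identification $\phi_n$ becomes $[\alpha] \mapsto [x\alpha]$. If $x\alpha \in I^{n+1}L$, then $\alpha \in (I^{n+1}L :_L x)$, which equals $I^n L$ for $n \gg 0$ by Remark~\ref{super1} applied to $L$. Thus $\phi_n$ is injective, and combining with the previous step yields the short exact sequence
$$0 \to \Tor_1^A(M,A/I^n) \to \Tor_1^A(M,A/I^{n+1}) \to \Tor_1^B(N,B/J^{n+1}) \to 0.$$

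Taking lengths (noting that finiteness of the two outer terms forces finiteness of the middle term, legitimizing $t_J^B(N;z)$) gives $t_I^A(M;n) - t_I^A(M;n-1) = t_J^B(N;n)$ for $n \gg 0$, which is the first assertion. The degree inequality then follows because the left-hand side is the first forward difference of the polynomial $t_I^A(M;z)$, which has degree exactly $\deg t_I^A(M;z)-1$ (using the convention on the zero polynomial stated before the lemma), so $\deg t_J^B(N;z) \le \deg t_I^A(M;z) - 1$.
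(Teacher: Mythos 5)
Your proposal is correct and follows essentially the same route as the paper: the eventual short exact sequence $0 \to A/I^n \xrightarrow{x} A/I^{n+1} \to B/J^{n+1} \to 0$, injectivity of the two outer maps via superficiality on $M$ and on the syzygy $L$ (realizing $\Tor_1^A(M,A/I^n)$ inside $L/I^nL$), the change-of-rings identification $\Tor_1^A(M,B/J^{n+1}) \cong \Tor_1^B(N,B/J^{n+1})$ (the paper cites Matsumura 18.2 where you invoke the collapsing spectral sequence, the same fact), and the length count giving the recursion and degree drop.
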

\begin{proof}
 Since $x$ is $I-$superficial for $A,$ one has following exact sequence for all $n \gg 0$
$$0 \longrightarrow  A/I^n  \xrightarrow[\hspace{13pt}]{i} A/I^{n+1} \longrightarrow  B/J^{n+1} \longrightarrow 0$$
 where the map $i$ is defined by $i_n(a+I^n)=xa+I^{n+1}.$ Applying $M \otimes_A-$ to above exact sequence gives the following exact sequence of $A-$modules

 \[ \Tor_1^A(M, A/I^n) \xrightarrow[\hspace{13pt}]{\Tor_1^A(M, i_n)} \Tor_1^A(M, A/I^{n+1}) \xrightarrow[\hspace{13pt}]{} \Tor_1^A(M, B/J^{n+1})  \xrightarrow[\hspace{13pt}]{}\]
 \[  M/I^nM \xrightarrow[\hspace{13pt}]{M\otimes i_n} M/I^{n+1}M  \xrightarrow[\hspace{13pt}]{} N/J^{n+1}N \xrightarrow[\hspace{13pt}]{} 0.\]
    Since $x$ is $I-$superficial on $M$ the map $M \otimes i_n$ is injective for $n \gg 0.$ We claim that
     the map $\Tor_1^A(M, i_n)$ is injective for $n \gg 0.$ For this consider the exact sequence defining $L,$ i.e.
     \[0 \longrightarrow L \longrightarrow F \longrightarrow  M \longrightarrow 0. \]
     Now applying the functors $-\otimes_A A/I^n$ and $-\otimes_A A/I^{n+1} $ one gets the  following commutative diagram

     \[
  \xymatrix
{
 0
 \ar@{->}[r]
  & \Tor_1^A(M,A/I^n)
    \ar@{->}[d]^{\Tor_1^A(M,i_n)}
\ar@{->}[r]^{}
 & L/I^nL  \ar@{->}[d]^{L \otimes i_n}
 \\
 0
 \ar@{->}[r]
  & \Tor_1^A(M,A/I^{n+1})
\ar@{->}[r]
 & L/I^{n+1}L
  }
\]
Notice the following \[ Ker(L \otimes i_n) =
\frac{I^{n+1}L:x}{I^nL}. \] As $x$ is superficial on $L$ it
follows that the map $L \otimes i_n$ is injective for $n \gg 0.$
Thus
$ \Tor_1^A(M,i_n)$ is injective for $n \gg 0.$ \\
 So for $n \gg 0$ above long exact sequence becomes

\[0 \longrightarrow  \Tor_1^A(M, A/I^n) \longrightarrow   \Tor_1^A(M, A/I^{n+1}) \longrightarrow   \Tor_1^A(M, B/J^{n+1}) \longrightarrow 0. \]
Now since $x$ is both $R-$regular and $M-$regular we obtain the
following isomorphism, see \cite[18.2 ]{Mat}
$$ \Tor_1^A(M, B/J^{n+1}) \cong  \Tor_1^B(N, B/J^{n+1}).$$
From this isomorphism and the exact sequence above it follows that
for $n \gg 0$
\[\ell_A( \Tor_1^A(M, A/I^{n+1})) = \ell_A( \Tor_1^A(M, A/I^n)) + \ell_A(\Tor_1^B(N, B/J^{n+1})).\]
Thus it follows that
$$\deg t_J^B(N;z) \leq \deg t_I^A(M;z) - 1. $$

\end{proof}

Recall that an ideal $J \subseteq I$ is reduction of $I$ if there
exists a natural number $m$ such that $JI^n=I^{n+1}$ for all $ n
\geq m.$ We define $r_J(I)$ to be the least such $m.$ A reduction
$J$ of $I$ is called minimal if it is minimal with respect to
inclusion. Reduction number of $I$  is defined as follows,
$$r(I)= \min \{ r_J(I) \mid  J ~ is ~ minimal ~ reduction ~ of ~ I  \}.$$

 \begin{proposition}\label{regg1}
  Let $(A,\m)$ be a \CM \ local ring of dimension $d \geq 2$ with infinite residue field $k = A/ \m.$ Let $M$ be a maximal  \CM  \ $A$-module. Assume
$I $ is locally a complete intersection  ideal in $A$ with $\htt
(I) = \htt_M(I)= d -1$, $r(I) \leq 1$ and $l_M(I) = d$. Denote by
$L$ the first syzygy of $M.$ Then we can choose $x \in I$
satisfying the following conditions
\begin{enumerate}[\rm (i)]
 \item $x^*$ is $G_I(A)-$regular.
 \item$x^*$ is $G_I(M)-$regular.
 \item $x^*$ is $G_I(L)-$filter regular.
 \item $x^o$ is $F(I)-$regular.
 \item $x^o$ is $F_I(M)-$ filter regular.
\end{enumerate}
\end{proposition}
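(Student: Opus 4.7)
My plan is to choose $x$ by prime avoidance in the finite-dimensional $k$-vector space $V = I/\m I = F(I)_1$. Since $k$ is infinite, a finite union of proper $k$-subspaces of $V$ cannot cover $V$, so a generic $x$ can be made to satisfy all five conditions simultaneously.

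The first and most substantial step is to establish the depth inequalities $\depth G_I(A) \geq 1$, $\depth G_I(M) \geq 1$ and $\depth F(I) \geq 1$, i.e., that the corresponding irrelevant ideals are not associated primes. Under our hypotheses ($A$ Cohen-Macaulay, $M$ maximal Cohen-Macaulay, $I$ locally a complete intersection with $\htt(I) = d-1$, $l(I) = d$ and $r(I) \leq 1$) a minimal reduction $J$ of $I$ consists of $d$ analytically independent elements that form an $A$-regular sequence and satisfies $JI = I^2$. The required depth bounds should then follow from the now-standard theory of analytic deviation one ideals, either by quoting results of Goto-Nakamura or Jayanthan-Verma on the associated graded and fiber cones of such ideals, or by a direct Valabrega-Valla style argument built from $J$ and the Koszul complex on a regular subsequence.

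Once these depth bounds are in place, the prime avoidance is formal. Set
\[
\mathcal{P} = \Ass_{G_I(A)} G_I(A) \cup \Ass_{G_I(A)} G_I(M) \cup \{P \in \Ass_{G_I(A)} G_I(L) : P \not\supseteq G_I(A)_+\},
\]
\[
\mathcal{Q} = \Ass_{F(I)} F(I) \cup \{Q \in \Ass_{F(I)} F_I(M) : Q \not\supseteq F(I)_+\}.
\]
For each $P \in \mathcal{P}$ the depth step ensures $P \not\supseteq G_I(A)_+$, so $P \cap G_I(A)_1$ is a proper $A/I$-submodule of $I/I^2$; pulling back through $I \twoheadrightarrow I/I^2$ yields a submodule $W_P$ with $I^2 \subseteq W_P \subsetneq I$, and by Nakayama's lemma the image of $W_P$ in $V$ is a proper $k$-subspace. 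For each $Q \in \mathcal{Q}$, the intersection $Q \cap F(I)_1 \subsetneq V$ is already a proper $k$-subspace. Since $\mathcal{P}$ and $\mathcal{Q}$ are finite, prime avoidance over the infinite field $k$ produces $x \in I$ whose image $x^o \in V$ misses every one of these proper subspaces; such $x$ then satisfies $x \notin W_P$ for all $P \in \mathcal{P}$ and $x^o \notin Q \cap F(I)_1$ for all $Q \in \mathcal{Q}$, yielding (i)--(v) at once.

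The main obstacle is the first step, extracting the depth inequalities from the analytic-deviation-one and reduction-number-one structure; after that, the argument is the by-now routine prime-avoidance construction patterned on Lemma \ref{free}(i).
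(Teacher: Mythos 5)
Your strategy is the same as the paper's: obtain positivity properties of the graded objects from the analytic-deviation-one, reduction-number-one hypotheses, then choose $x$ by prime avoidance in $I/\m I$ over the infinite residue field; the avoidance half of your write-up (including the Nakayama pullback of $P\cap G_I(A)_1$ to a proper subspace of $I/\m I$) is correct and is exactly what the paper does. The gap is in the reduction that precedes it. You reduce everything to $\depth G_I(A)\geq 1$, $\depth G_I(M)\geq 1$, $\depth F(I)\geq 1$ and then assert that ``the depth step ensures $P \not\supseteq G_I(A)_+$'' for every associated prime $P$ of $G_I(A)$ and of $G_I(M)$. That inference fails: here $G_I(A)_0 = A/I$ is not a field ($\dim A/I = 1$), so an associated prime can contain the irrelevant ideal $G_I(A)_+$ without being the maximal homogeneous ideal, and depth one (computed with respect to the maximal homogeneous ideal) does not rule this out --- for instance $R_0[x]/(x^2)$ with $R_0$ a DVR has depth one while $\grade((x),R)=0$. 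If some associated prime of $G_I(A)$ or $G_I(M)$ contains $G_I(A)_+$, then no degree-one element whatsoever can be chosen regular, so your avoidance sets would be of no help. What you actually need on the associated graded side is $\grade(G_I(A)_+, G_I(A))\geq 1$ and $\grade(G_I(A)_+, G_I(M))\geq 1$; only for $F(I)$ is your formulation adequate, since $F(I)_0=k$ makes $F(I)_+$ the maximal graded ideal.

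The paper closes exactly this point by proving more than depth one: $F(I)$ is Cohen--Macaulay by Cortadellas' theorem (using $r(I)\leq 1$ and $I$ locally a complete intersection), $G_I(A)$ is Cohen--Macaulay by Huckaba--Huneke (analytic deviation one, $r(I)\leq 1$), and the module analogue gives $G_I(M)$ Cohen--Macaulay; then, since these are Cohen--Macaulay of dimension $d\geq 2$ while $\dim G_I(A)/G_I(A)_+ = \dim A/I = 1$ (and $\dim M/IM =1$), no associated --- equivalently minimal --- prime can contain the irrelevant ideal, after which the prime avoidance runs as you describe. So your plan is repairable, but as written the key input is both left as a gesture at the literature (``should follow from Goto--Nakamura or Jayanthan--Verma'') and, more seriously, is stated as a depth bound that is too weak to justify the step you base on it.
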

\begin{proof}
Since  $r(I) \leq 1$ by \cite[4.2]{Corta} $F(I)$ is {\CM}. Note
here that
 $l_M(I)=d$ gives $l(I)=d.$
 Now by \cite[2.9]{Hun} $G_I(A)$ is {\CM} ring. Extension of Huckaba-Huneke's result to maximal \CM\;modules gives $G_I(M)$ is a {\CM}  $G_I(A)$ module. Let $\mathfrak A = Ass(G_I(A)) \cup Ass(G_I(M)).$ Note that since $ d \geq 2$ we have  $\grade(I) = d -1 \geq 1.$
 So if $ P \in Ass(G_I(A)) \cup Ass(G_I(M))$ then $ P \notin V(G(I)_+).$ Hence
 \[ \mathfrak A  =  Ass(G_I(A)) \cup Ass(G_I(M)) \setminus V(G(I)_+). \]
 Consider the following set
 $$\mathfrak B =  \mathfrak A \cup Ass(G_I(L)) \setminus V(G(I)_+) . $$
 Let $ \mathfrak B = \{ P_1, ..., P_t \}.$ Set $V_i = P_i \cap I/I^2.$ It is clear that $V_i$ are proper submodules of $I/I^2.$
 Since $F(I)$ is \CM\;we have
 \[ \mathfrak C = Ass(F(I)) = Ass(F(I)) \setminus  V(F(I)_+). \]

 Now let $\mathfrak D = \mathfrak C \cup Ass(F_I(M)) \setminus V(F(I)_+) = \{ Q_1,...,Q_k \}.$ Set $W_i = Q_i \cap I/\m I.$ Observe that $V_i \otimes k$ and $ W_j$ are proper subspaces of $I/ \m I$ for $1 \leq i \leq t $ and $1 \leq j \leq k.$ Since $k$ is infinite we can choose \[ \bar x \in \frac{I}{\m I} \setminus \{ \bigcup_{i=1}^t V_i\otimes k , \; \bigcup_{i=1}^k W_i \}. \]
 We now verify conditions (i) to (v). Since $ x \notin V_i \otimes k$ gives that $ x \notin V_i.$ Therefore $x^* \notin P$ for $ P \in \mathfrak A.$ Hence $x^*$ is $G_I(A)-$regular and $G_I(M)-$regular. So (i) and (ii) follow. Similarly (iii) follows because $x^* \notin P$ for $P \in \mathfrak B.$ For (iv) and (v) we notice that $x^o \notin P$ for $P \in \mathfrak D$ since $x \notin W_i.$ It therefore follows that $x^o$ is  $F(I)-$regular and $F_I(M)-$ filter regular.
\end{proof}
\begin{corollary}\label{super-reg} Let $(A,\m)$ be a \CM \ local ring of dimension $d \geq 2$ with infinite residue field $k = A/ \m.$  With $I$, $L$, $M$ and $ x \in I$ as in the  Proposition \ref{regg1} we have
\begin{enumerate}[\rm (i)]
 \item $x$ is $I-$superficial on $A$, $M$ and $L.$
 \item $x$ is regular on $A$, $M$ and $L.$
\end{enumerate}
\end{corollary}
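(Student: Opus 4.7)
The plan is to derive both parts of this corollary directly from Proposition \ref{regg1} via two standard tools: first, the classical dictionary that translates (filter) regularity of the initial form $x^{*}$ in the associated graded module $G_{I}(N)$ into $I$-superficiality of $x$ on $N$; second, Remark \ref{super1}, which upgrades superficial elements to regular ones whenever $\grade(I,N) \geq 1$. Neither step requires new ideas, so this corollary is essentially a bookkeeping consequence of the preparation already done.

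For part (i), I would simply invoke the fact that if $x^{*} \in G_{I}(A)_{1}$ is $G_{I}(N)$-filter regular, then $x$ is $I$-superficial on $N$. Since genuine $G_{I}(-)$-regularity is stronger than filter regularity, conditions (i), (ii), (iii) of Proposition \ref{regg1} give exactly what is needed for $N = A$, $M$, and $L$ respectively, yielding the three superficiality claims at once.

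For part (ii), the task reduces to verifying $\grade(I,N) \geq 1$ for each $N \in \{A, M, L\}$, after which Remark \ref{super1} converts superficiality into regularity. For $N = A$, this is $\grade(I) = \htt(I) = d-1 \geq 1$ since $A$ is Cohen-Macaulay and $d \geq 2$. For $N = M$, the assumption $\htt_{M}(I) = d-1 \geq 1$ together with $M$ being maximal Cohen-Macaulay does the job. The only step requiring a brief argument, and the mild obstacle here, is the case $N = L$: starting from the syzygy sequence $0 \to L \to A^{s} \to M \to 0$ and applying the depth lemma with $\depth A = \depth M = d$, one gets $\depth L \geq d$; combined with $L \subseteq A^{s}$ (hence $\dim L \leq d$), this forces $L$ to be maximal Cohen-Macaulay. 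Consequently $\grade(I, L) = d-1 \geq 1$, so Remark \ref{super1} applies and gives $x$-regularity on $L$, completing the proof.
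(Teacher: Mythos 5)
Your proposal is correct and follows essentially the same route as the paper: part (i) is deduced from conditions (i)--(iii) of Proposition \ref{regg1} via the standard equivalence between $I$-superficiality of $x$ and filter-regularity of $x^*$ on the associated graded module, and part (ii) from $\grade(I,-)\geq 1$ together with Remark \ref{super1}. You merely spell out details the paper leaves implicit, such as checking that the syzygy $L$ is maximal Cohen--Macaulay so that $\grade(I,L)\geq 1$.
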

\begin{proof}
 (1) This follows easily from the first three conditions of the  Proposition \ref{regg1}. \\
 (2) Note that grade of $I$ w.r.t. $R$, $M$ and $L$ are all at least $1.$ So superficial elements on them are regular elements.
\end{proof}
\begin{remark}\label{gen-complete} For $x \in I $ as chosen in Proposition \ref{regg1} we have $\bar I = I/(x)$ is locally a complete intersection ideal in $\bar A  =A/(x).$ To see this it suffices to observe that $\Min(\bar A/ \bar I)= \{ P/(x) \mid P \in Min(A/I) \}.$

\end{remark}

\begin{remark}
 If $J$ is minimal reduction of $I$ such that $J$ is genrerated by a regular sequence
 and $JI = I^2$ then $F(I)$ is \CM, see \cite[1]{SK}. This result holds true even when $I$ is locally a complete intersection, see \cite[4.2]{Corta}.
\end{remark}
\begin{lemma} \label{iso}Let $x \in I$ and set $B=A/(x)$, $N = M/ xM$ and $ \bar I= I /(x).$ Suppose $x^*$ is $G_I(M)-$regular then  we have the following isomorphism
$$ F_{\bar I}(N) \cong \frac{F_I(M)}{x^oF_I(M)}.$$
\end{lemma}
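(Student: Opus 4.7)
The plan is to identify each side degree-wise with the same subquotient of $I^n M$ and assemble the pieces into a natural isomorphism. Writing $N = M/xM$ and $\bar I = I/(x)$ gives $\bar I^n N = (I^n M + xM)/xM$ and $\bar{\m}\,\bar I^n N = (\m I^n M + xM)/xM$, so the second isomorphism theorem yields
\[
F_{\bar I}(N)_n \cong \frac{I^n M}{\m I^n M + (xM \cap I^n M)}.
\]
On the other side, since $x^o$ lives in degree $1$ of $F(I) = \bigoplus_{n \geq 0} I^n/\m I^n$, its image in degree $n$ of $F_I(M)$ is $(xI^{n-1}M + \m I^n M)/\m I^n M$, so
\[
\bigl(F_I(M)/x^o F_I(M)\bigr)_n \;=\; \frac{I^n M}{xI^{n-1} M + \m I^n M}.
\]

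The heart of the argument is the identity $xM \cap I^n M = xI^{n-1}M$ for every $n \geq 1$. First I would observe that the injectivity of $x^*$ on $(G_I M)_0 \to (G_I M)_1$, combined with Krull's intersection theorem, forces $x$ to be $M$-regular: if $xm = 0$ then $xm \in I^k M$ for every $k$, so by induction $m \in \bigcap_k I^k M = 0$. Hence $xM \cap I^n M = x\,(I^n M :_M x)$, and it remains to prove $(I^n M :_M x) = I^{n-1}M$. I would argue this by induction on $n$: given $xm \in I^n M$, one has $xm \in I^{n-1} M$, so by the inductive hypothesis $m \in I^{n-2} M$; then the class $m + I^{n-1} M \in (G_I M)_{n-2}$ satisfies $x^* \cdot (m + I^{n-1}M) = xm + I^n M = 0$, and $G_I(M)$-regularity of $x^*$ upgrades this to $m \in I^{n-1} M$.

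Once this equality is in hand, the two descriptions of the $n$-th graded piece agree, and the natural surjection $I^n M \twoheadrightarrow (F_I(M)/x^o F_I(M))_n$ factors through $F_{\bar I}(N)_n$ to give the required isomorphism. Naturality in $n$ is immediate, and compatibility with the $F(\bar I)$-module structure (under the identification $F(\bar I) \cong F(I)/x^o F(I)$, which is simply the $M = A$ case of the same statement) follows because both actions are induced by multiplication by classes of elements of $I$. The main — and essentially only — obstacle is establishing $(I^n M :_M x) = I^{n-1} M$, which is precisely what the $G_I(M)$-regularity of $x^*$ is designed to furnish; everything else is bookkeeping with the second isomorphism theorem.
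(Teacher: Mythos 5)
Your argument is correct and is essentially the paper's proof: both sides are computed degree-wise (via the second isomorphism theorem / the natural surjection from $I^nM$) and everything reduces to the identity $xM \cap I^nM = xI^{n-1}M$ for $n \geq 1$. The only difference is that the paper gets this identity by citing the Valabrega--Valla criterion, whereas you prove it directly by induction from the $G_I(M)$-regularity of $x^*$ (together with a correct deduction that $x$ is $M$-regular), which is a sound self-contained substitute for the citation.
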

\begin{proof}
Observe that we have $$F_{\bar I}(N)_n=\frac{I^nM+(x)M}{\m I^nM
+(x)M} \; \; \textit{and}
 \; \; [ \frac{F_I(M)}{x^oF_I(M)} ]_n =\frac{I^nM}{xI^{n-1}M+ \m I^nM} $$
 Consider the following natural map
 \[\alpha: I^nM \longrightarrow  \frac{I^nM+(x)M}{\m I^nM +(x)M} \]
 We now have \[ Ker \alpha = I^nM \cap (\m I^nM +(x)M) =\m I^nM + (x)M \cap I^{n}M\]
 Since $x^*$ is $G_I(M)-$regular for all $n \geq 1$ we have $(x)M \cap I^{n}M = xI^{n-1}M$ by Valabrega-Valla criterion, see \cite[2.6]{VV}.
 Hence  $Ker \alpha = xI^{n-1}M+ \m I^nM$  and so \[\frac{I^nM+(x)M}{\m I^nM +(x)M} \cong \frac{I^nM}{xI^{n-1}M+ \m I^nM} \]
 Hence \[ F_{\bar I}(N) \cong \frac{F_I(M)}{x^oF_I(M)}\]
\end{proof}

\section{Proof of the Main Theorem}

 We now prove the main theorem.
 \begin{theorem}\label{anal}
Let $(A,\m)$ be a \CM \ local ring of dimension $d \geq 1$. Let
$M$ be a maximal  {\CM} $A$-module. Assume $I $ is locally a
complete intersection  ideal in $A$ with $r(I) \leq 1, \; \htt (I)
= \htt_M(I) = d -1$ and $l_M(I) = d$. If $\deg t_I^A(M,z) < d -1$
then $F_I(M) $ is a free $F(I)-$module.
 \end{theorem}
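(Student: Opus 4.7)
The plan is to induct on $d$, with the inductive step reducing dimension by one via a carefully chosen superficial element and then invoking the rigidity Lemma~\ref{free} to lift freeness back to $F_I(M)$. The base case $d = 1$ is somewhat degenerate since then $\htt(I) = 0$, and I would dispatch it by a separate direct argument: the hypothesis $\deg t_I^A(M,z) < 0$ forces $\Tor^A_1(M, A/I^{n+1}) = 0$ for $n \gg 0$, and combined with a principal minimal reduction $(y) \subseteq I$ satisfying $yI = I^2$ together with the defining sequence for the first syzygy of $M$, one extracts freeness of $F_I(M)$ directly.

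For the inductive step, assuming $d \geq 2$, my plan is first to apply Proposition~\ref{regg1} to choose $x \in I$ enjoying all five regularity properties listed there, and then set $B = A/(x)$, $N = M/xM$, $\bar I = I/(x)$. The first task is to check that all hypotheses of the theorem transfer cleanly to $(B, N, \bar I)$ with $\dim B = d-1$: Remark~\ref{gen-complete} handles the locally complete intersection hypothesis; the relation $JI = I^2$ for a minimal reduction $J$ of $I$ descends to $\bar J\bar I = \bar I^2$, giving $r(\bar I) \leq 1$; $x$ being superficial and $A$-regular forces the height to drop to $d - 2$; and Lemma~\ref{iso} together with $x^o$ being $F_I(M)$-filter regular of degree one forces $l_N(\bar I) = \dim F_{\bar I}(N) = d - 1$. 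Lemma~\ref{Pu} then yields
$$\deg t_{\bar I}^B(N, z) \leq \deg t_I^A(M,z) - 1 < d - 2 = \dim B - 1,$$
so by induction $F_{\bar I}(N)$ is a free $F(\bar I)$-module.

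To lift this back to $F_I(M)$, I would combine Lemma~\ref{iso} with Lemma~\ref{free}. Since $r(I) \leq 1$ and $I$ is locally a complete intersection, $F(I)$ is \CM{} of dimension $d \geq 2$ by \cite[4.2]{Corta}, and Proposition~\ref{regg1}(iv) tells us $x^o$ is $F(I)$-regular; Lemma~\ref{iso} identifies $F_I(M)/x^o F_I(M)$ with $F_{\bar I}(N)$, which is free over $F(\bar I) \cong F(I)/(x^o)$, itself \CM{} of dimension $d - 1 \geq 1$. Positive grade of this quotient with respect to the irrelevant ideal feeds Lemma~\ref{free}(ii), upgrading $x^o$ from $F_I(M)$-filter regular to $F_I(M)$-regular, and then Lemma~\ref{free}(iii) delivers the desired freeness of $F_I(M)$ over $F(I)$. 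I expect the main obstacles to be the detailed bookkeeping in the hypothesis-transfer step---especially the equality $l_N(\bar I) = d - 1$ and the fact that $\bar I$ is again locally a complete intersection with correct height---and the point that the depth of $F_I(M)$ itself is not visible a priori, forcing us to read positivity of grade off the free quotient via the filter-regular-to-regular promotion in Lemma~\ref{free}(ii).
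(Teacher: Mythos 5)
Your proposal follows essentially the same route as the paper: induction on $d$, with the base case $d=1$ handled via the vanishing of $\Tor_1$ and a principal minimal reduction with $yI=I^2$ (the paper makes this precise by counting $\mu(I^nM)=\mu(I^n)\mu(M)$ and comparing Hilbert functions of a surjection $F(I)^{\mu(M)}\to F_I(M)$), and the inductive step via the element of Proposition~\ref{regg1}, the degree drop of Lemma~\ref{Pu}, the isomorphism of Lemma~\ref{iso}, and the promotion of $x^o$ from filter-regular to regular followed by Lemma~\ref{free}(3). You even identify the same key point as the paper, namely that $\depth F_I(M)$ is unknown a priori and positivity of grade must be read off the free quotient $F_{\bar I}(N)$, so the argument is correct and matches the paper's proof.
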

 \begin{proof}
 It follows from Proposition \ref{tor2} that $\ell(\Tor^{A}_{1}(M, A/I^{n+1})) < \infty$ for all $n \geq 0.$ So $\deg t_I^A(M,z) \leq d -1.$ Suppose now that
 $\deg t_I^A(M,z) < d -1.$  Note that $l_M(I)=d$ gives $l(I)=d.$ We prove the result by induction on $d.$\\
 \indent  Suppose first $d=1.$  Let $J=(a)$ be a minimal reduction of $I.$ Since $r(I) \leq 1$ by \cite[4.2]{Corta} $F(I)$ is {\CM}.  Now since $r(I) \leq 1$ and $\dim F(I) =1$ it follows from \cite[4.1.12]{BH} that $\mu(I^n) =c$ for all $n \geq 1.$
 Since $\deg t_I^A(M,z) < 0 $ we have $\Tor^{A}_{1}(M, A/I^{n}) = 0$ for $ n \gg  0.$ So $M\otimes_A I^n = I^nM.$ Tensoring with $A/\m$ gives the following  $$\frac{M}{\m M} \otimes_k \frac{I^n}{\m I^n} = \frac{I^nM}{\m I^n M} .$$
  Therefore
  \begin{equation}\label{length}
   \ell \left(\frac{I^nM}{\m I^n M} \right) = c \mu(M) \quad \ \text{for all} \  n \gg 0.
  \end{equation}
  \textbf{Claim:} $F_I(M)$ is a free $F(I)$-module.\\
  Note that we have $aI^nM = I^{n+1}M$ for $n \geq 1.$ Therefore
  \[c \mu(M) \geq  \mu(IM) \geq \mu(I^2M) \geq \ldots \geq \mu(I^nM) \geq \ldots \]
  But we have $\mu(I^nM) =c \mu(M)$ so that $ \mu(I^nM)= c \mu(M)$ for all $n \geq 1.$\\
  Let $\mu(M) =t$ and $M=< m_1,...,m_t>.$ We define a graded $F(I)-$linear map $\phi$
  \[\phi :F(I)^t \longrightarrow F_I(M) \]
  by $\phi(e_i) =\bar m_i$ where $ \bar m_i \in M/\m M.$
  Observe that $\phi$ is surjective,
   $$ \dim_k [F_I(M)^t]_0 = t $$
   $$ \dim_k [F_I(M)]_0 = t $$
   and for $n \geq 1$
  $$ \dim_k [F(I)^t]_n = ct  $$
  $$ \dim_k [F_I(M)]_n = ct $$
  Therefore $\phi$ is an isomorphism. So $F_I(M)$ is free $F(I)-$module.\\
 Suppose now that $ d \geq 2.$  So $ \htt(I) = d-1 \geq 1.$ Choose $x \in I  $ as in Proposition \ref{regg1}.
  So now $x$ satisfies all the conditions given in Proposition \ref{regg1} and Corollary \ref{super-reg}. Set $B=A/(x)$, $ N = M/ xM$ and $ \bar I= I /(x).$ By our choice of $x$ we have  $\htt(\bar I) = \htt_N(\bar I) = d-2$ and  $l_N(\bar I)=d-1.$  By Remark \ref{gen-complete} $\bar I$ is locally a complete intersection in $B.$ Also $r(\bar I) \leq 1.$ Now by Lemma \ref{Pu} we have
  $$\deg t_{\bar I}^B(N;z) \leq  \deg t_I^A(M;z) - 1.$$
 By our hypothesis $\deg t_I^A(M;z) < d-1$ and so we have $\deg t_{\bar I}^B(N;z) < d-2.$
 Thus by induction hypothesis we have $F_{\bar I }(N)$ is free
   $F(\bar I)-$module. Note that we have following isomorphisms by  Lemma \ref{iso}

   $$ F_{\bar I}(N) \cong \frac{F_I(M)}{x^oF_I(M)} \quad and \quad  F(\bar I) \cong \frac{F(I)}{x^oF(I)} $$
   Since $ F_{\bar I}(N) $ is free $ F(\bar I)-$module one has $\depth( F_{\bar I}(N)) = \depth(F(\bar I)).$
   Now since $F(I)$ is \CM \; and $x^o$ is $F(I)-$regular we have $\depth(F(\bar I)) =d-1 \geq 1.$ Hence
   $\depth( F_{\bar I}(N)) \geq 1.$
   By our choice of $x$ we have that $x^o$ is $F_I(M)-$filter regular. So $x^o$ is $F_I(M)-$regular by Lemma \ref{free}(2). Again by Lemma \ref{free}(3) we get that $F_I(M)$ is free $F(I)-$module.

 \end{proof}
 \section{The case of hypersurface}
 When $A$ is a hypersurface ring we show that if $M$ is non-free maximal \CM\;$A-$module of dimension $d=1$ having constant rank  then $\deg t_I(M, z) = d-1.$

\begin{proposition}
 Let $A$ be a hypersurface ring of dimension $d = 1.$  Let $M$ be a maximal \CM\;$A-$module having constant rank. Let $I$ be a locally  complete intersection ideal with $\htt(I) =0$ and $l(I)=1.$ Then the following conditions are equivalent:
 \begin{enumerate}
  \item [\rm (i)] $\deg t_I^A(M, z) < 0.$
  \item [\rm (ii)] $M$ is free $A-$ module.
 \end{enumerate}
\end{proposition}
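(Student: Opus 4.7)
The direction (ii) $\Rightarrow$ (i) is immediate: if $M$ is $A$-free then $\Tor^A_1(M,-)$ vanishes identically, so $t_I^A(M,z)$ is the zero polynomial, which by our convention has degree $-\infty < 0$.

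For the converse, assume $\deg t_I^A(M,z) < 0$, so $\Tor^A_1(M, A/I^{n+1}) = 0$ for all $n \gg 0$. My plan is to pass from $A/I^{n+1}$ to the torsion-free ideal $I^{n+1}$ and then invoke a Huneke--Wiegand style freeness criterion. First I would use the rigidity of Tor over the hypersurface $A$---leveraging the $2$-periodicity of minimal resolutions of non-free maximal \CM\ modules over a hypersurface (Eisenbud) together with the Huneke--Wiegand rigidity machinery---to upgrade the hypothesis to $\Tor^A_i(M, A/I^{n+1}) = 0$ for all $i \geq 1$ and all $n \gg 0$. From the short exact sequence $0 \rt I^{n+1} \rt A \rt A/I^{n+1} \rt 0$ and the resulting long exact sequence of Tor, this gives $\Tor^A_1(M, I^{n+1}) \cong \Tor^A_2(M, A/I^{n+1}) = 0$.

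Next I would check that both $M$ and $I^{n+1}$ are torsion-free: $M$ because it is maximal \CM\ of dimension $1$, so $\Ass M \sub \Min A$, and $I^{n+1}$ as an $A$-submodule of $A$. Since $M$ has positive constant rank by hypothesis, the Huneke--Wiegand freeness theorem for one-dimensional hypersurfaces---vanishing of $\Tor^A_1$ between two finitely generated torsion-free modules, one of positive constant rank, forces one of them to be $A$-free---would then force $M$ or $I^{n+1}$ to be free. To exclude the second alternative: since $\htt(I) = 0$ and $I$ is locally a complete intersection, for any minimal prime $\p \supseteq I$ the localization $I_\p$ is a complete intersection in the Artinian local ring $A_\p$, necessarily generated by an empty regular sequence, so $I_\p = 0$. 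Hence $I^{n+1}_\p = 0$, and freeness $I^{n+1} \cong A^m$ would force $A_\p^m = 0$, giving $m = 0$ and $I^{n+1} = 0$; but $l(I) = 1$ implies $\dim F(I) = 1$, forcing $I^n \neq 0$ for all $n$, a contradiction. Consequently $M$ is free.

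The main obstacle will be the rigidity step, since a single vanishing $\Tor^A_1(M, A/I^{n+1}) = 0$ need not automatically force $\Tor^A_2(M, A/I^{n+1}) = 0$ over a hypersurface, in contrast with the regular case. I plan to exploit the eventual $2$-periodicity of $\{\Tor^A_i(M, A/I^{n+1})\}_i$ (Eisenbud) combined with the fact that our hypothesis supplies vanishing of $\Tor^A_1$ for infinitely many $n$, not a single value. The constant-rank hypothesis on $M$ enters precisely at the Huneke--Wiegand freeness-criterion step, while the local complete intersection and height-zero hypotheses on $I$ are used only to rule out freeness of $I^{n+1}$.
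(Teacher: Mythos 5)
Your direction (ii) $\Rightarrow$ (i) and your final step ruling out freeness of $I^{n+1}$ (via local complete intersection, $\htt(I)=0$ and $l(I)=1$) are fine, but the core of your (i) $\Rightarrow$ (ii) argument has a genuine gap at exactly the place you flag: the rigidity upgrade from $\Tor^A_1(M,A/I^{n+1})=0$ to $\Tor^A_2(M,A/I^{n+1})=0$ (equivalently $\Tor^A_1(M,I^{n+1})=0$). Eisenbud's $2$-periodicity is periodicity in the homological index $i$ for a fixed second argument: for $M$ maximal \CM\ it gives $\Tor^A_1\cong\Tor^A_3\cong\Tor^A_5\cong\cdots$ and, separately, $\Tor^A_2\cong\Tor^A_4\cong\cdots$, so knowing $\Tor^A_1(M,A/I^{n+1})=0$ for all large $n$ (indeed for every $n$) only kills the odd column and says nothing about the even one; having ``infinitely many $n$'' does not couple the two columns. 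Concretely, for $A=k[[x,y]]/(xy)$, $M=A/(x)$ and $I=(y)$ one has $\Tor^A_1(M,A/I^n)=0$ for all $n$ while $\Tor^A_2(M,A/I^n)\neq 0$ (and $\Tor^A_1(M,I^n)\cong k$), with $M$, $I^n$ torsion-free and non-free. This example is only excluded by the constant-rank hypothesis, which your plan defers entirely to the Huneke--Wiegand step; so the rigidity statement you intend to prove without using constant rank is simply false, and the proposed mechanism cannot be repaired as stated. (In addition, the precise freeness criterion you invoke --- vanishing of a single $\Tor_1$ between torsion-free modules forcing freeness --- is itself not the form of Huneke--Wiegand's Theorem 3.1, which is about the tensor product being maximal \CM.)

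The detour is also unnecessary, and this is where the paper's proof differs: from $0\rightarrow I^n\rightarrow A\rightarrow A/I^n\rightarrow 0$ one gets $0\rightarrow\Tor^A_1(M,A/I^n)\rightarrow M\otimes_A I^n\rightarrow M\rightarrow M/I^nM\rightarrow 0$, so the hypothesis $\Tor^A_1(M,A/I^n)=0$ (for large $n$) already identifies $M\otimes_A I^n$ with the submodule $I^nM$ of $M$; since $\dim M=1$ this makes $M\otimes_A I^n$ maximal \CM\ with no rigidity input at all. Then Huneke--Wiegand's tensor-product theorem (their Theorem 3.1, which is where constant rank of $M$ is used) gives that $M$ or $I^n$ is free, and $I^n$ is excluded exactly as you do. If you restructure your argument to apply Huneke--Wiegand to $M\otimes_A I^{n}$ being maximal \CM\ rather than to a $\Tor_1$-vanishing criterion, the problematic rigidity step disappears.
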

\begin{proof}
  So let $d=1$ first.
 Consider the following exact sequence
 \[0  \longrightarrow I^n \longrightarrow A \longrightarrow A/I^n \longrightarrow 0 \]
 Tensoring with $M$ we get
 \[0  \longrightarrow \Tor_1^A(M, A/I^n) \longrightarrow M \otimes_A I^n \longrightarrow M \longrightarrow  M/ I^nM \longrightarrow 0\]
 Suppose $\deg t_I(M, z) < d-1=0.$ So $\Tor_1^A(M, A/I^n) =0. $ Since $\dim M=1$ we obtain from the exact sequence above that $M \otimes_A I^n$ is maximal \CM\;$A-$module. It now follows from the \cite[Theorem 3.1]{HunWei} that atleast one of  $M$ or $I^n$ is free $A-$module. But $I^n$ cannot be free because $\htt(I) =0.$ Hence $M$ is free $A-$module.  \\
 (ii) $\Rightarrow$ (i) is obvious.
\end{proof}

 \section{Example}

 We give an example where $F_I(M)$ is one dimensional non-free $F(I)-$module having depth zero.
 \begin{example}  Let $A$ be a \CM \; local ring of dimension $1$ with atleast two distinct minimal prime ideals.
Let $P_1$ and $P_2$ be those two minimal prime ideals.  Now let
$M=A/P_1 \oplus A/P_2.$ Notice $M$ is maximal {\CM} $A-$module of
dimension $1.$ Now choose $b \in P_2 \setminus P_1.$ Let $ J =
(b)$ and $ \mathfrak A = \Min(A/J)=\{P_2,P_3,...,P_s\}.$ Since
$A_{P_i}$ is Artinian we have $s_ib^{n_i}=0$ for $ s_i \notin P_i$
and for $1 \leq i \leq s.$  Let $ n = max \{ n_i \mid  1 \leq i
\leq s \}.$ Let $a = b^n$ and $I =(a).$ We now have $I_{P}=0$ for
$ P \in \Min(A/I).$ We thus have
  \begin{enumerate}
 \item [\rm(i)] $ \htt(I) = 0.$
 \item [\rm(ii)] $l(I)=1.$
 \item [\rm(iii)] I is locally a complete intersection.
 \item [\rm(iv)] $\mu (M) =2.$
 \item [\rm(v)] $\mu(a^nM) = \ell(a^nM/\m a^nM) = 1$ for all $n \geq 1$ (Since $a^n \notin P_1$ for all $n \geq 1$).
 \item [\rm(vii)] $M$ is not free.
\end{enumerate}
Computation of Tor:\\
$$ \Tor^{A}_{1}(M, A/I^{n}) = \Tor^{A}_{1}(A/P_1\oplus A/P_2, A/(a^{n})) \cong  \frac{P_2 \cap (a^n)}{P_2(a^n)}$$
 Since $a \in P_2 $ we have $P_2 \cap (a^n)= (a^n).$
 So $$ \Tor^{A}_{1}(M, A/I^{n}) \cong \frac{(a^n)}{P_2 (a^n)}$$
 By Nakayama Lemma $(a^n) \neq P_2 (a^n)$ so that for $n \gg 0$ we have $$\ell(\Tor^{A}_{1}(M, A/I^{n})) =c $$ where $c > 0.$
Notice now that $F(I) \cong k[x].$ We claim that $F_I(M)$ is  not
free. Supposing otherwise we get that $F_I(M) \cong k[x]^t$ for
some $t \geq 1.$ Since all the generators of $F_I(M)$ are in
degree zero we get that $\mu(M)=1$ which is contradiction to (iv)
above. So $F_I(M)$ is not free $F(I)-$module. Since $k[x]$ is
regular ring we have $\Proj F_I(M) < \infty.$ By
Auslander-Buchsbaum formula we have $\depth(F_I(M))=0.$
 So $F_I(M)$ is not {\CM} $F(I)-$module.
\end{example}

\bibliographystyle{amsplain}
\bibliography{ref}

\def\cfudot#1{\ifmmode\setbox7\hbox{$\accent"5E#1$}\else
  \setbox7\hbox{\accent"5E#1}\penalty 10000\relax\fi\raise 1\ht7
  \hbox{\raise.1ex\hbox to 1\wd7{\hss.\hss}}\penalty 10000 \hskip-1\wd7\penalty
  10000\box7}
\providecommand{\bysame}{\leavevmode\hbox to3em{\hrulefill}\thinspace}
\providecommand{\MR}{\relax\ifhmode\unskip\space\fi MR }
\providecommand{\MRhref}[2]{%
  \href{http://www.ams.org/mathscinet-getitem?mr=#1}{#2}
}
\providecommand{\href}[2]{#2}
\begin{thebibliography}{10}

\bibitem{BH}
Winfried Bruns and J{\"u}rgen Herzog, \emph{Cohen-{M}acaulay rings}, Cambridge
  Studies in Advanced Mathematics, vol.~39, Cambridge University Press,
  Cambridge, 1993. \MR{MR1251956}

\bibitem{Corta}
Teresa Cortadellas and Santiago Zarzuela, \emph{On the depth of the fiber cone
  of filtrations}, J. Algebra \textbf{198} (1997), no.~2, 428--445.
  \MR{MR1489906}

\bibitem{Hun}
Sam Huckaba and Craig Huneke, \emph{Powers of ideals having small analytic
  deviation}, Amer. J. Math. \textbf{114} (1992), no.~2, 367--403.
  \MR{MR1156570}

\bibitem{HunWei}
Craig Huneke and Roger Wiegand, \emph{Tensor products of modules and the
  rigidity of {${\rm Tor}$}}, Math. Ann. \textbf{299} (1994), no.~3, 449--476.
  \MR{MR1282227 (95m:13008)}

\bibitem{Pu-Srikanth}
Srikanth Iyengar and Tony~J. Puthenpurakal, \emph{Hilbert-{S}amuel functions of
  modules over {C}ohen-{M}acaulay rings}, Proc. Amer. Math. Soc. \textbf{135}
  (2007), no.~3, 637--648 (electronic). \MR{MR2262858}

\bibitem{Kod}
Vijay Kodiyalam, \emph{Homological invariants of powers of an ideal}, Proc.
  Amer. Math. Soc. \textbf{118} (1993), no.~3, 757--764. \MR{MR1156471}

\bibitem{Mat}
Hideyuki Matsumura, \emph{Commutative ring theory}, second ed., Cambridge
  Studies in Advanced Mathematics, vol.~8, Cambridge University Press,
  Cambridge, 1989, Translated from the Japanese by M. Reid. \MR{MR1011461}

\bibitem{Pu1}
Tony~J. Puthenpurakal, \emph{Hilbert-coefficients of a {C}ohen-{M}acaulay
  module}, J. Algebra \textbf{264} (2003), no.~1, 82--97. \MR{MR1980687}

\bibitem{SK}
Kishor Shah, \emph{On the {C}ohen-{M}acaulayness of the fiber cone of an
  ideal}, J. Algebra \textbf{143} (1991), no.~1, 156--172. \MR{MR1128652}

\bibitem{VV}
Paolo Valabrega and Giuseppe Valla, \emph{Form rings and regular sequences},
  Nagoya Math. J. \textbf{72} (1978), 93--101. \MR{MR514892}

\end{thebibliography}
\end{document}